\newtheorem{theorem}{Theorem}
\newtheorem*{theorem*}{Theorem}
\numberwithin{theorem}{section} 
\numberwithin{proposition}{section} 
\theoremstyle{remark}
\newcommand{\Eps}{\mathcal{F}}
\newcommand{\T}{\mathcal{T}}
\title{Rainbow Triangles in Families of Triangles}
\author{Ido Goorevitch \, and \, Ron Holzman\\Department of Mathematics\\Technion-Israel Institute of Technology\\\texttt{ido.g@campus.technion.ac.il}, \, \texttt{holzman@technion.ac.il}}
\date{}
\begin{document}
\maketitle

\begin{abstract}
    We prove that a family $\T$ of distinct triangles on $n$ given vertices that does not have a rainbow triangle (that is, three edges, each taken from a different triangle in $\T$, that form together a triangle) must be of size at most $\frac{n^2}{8}$. We also show that this result is sharp and characterize the extremal case. In addition, we discuss a version of this problem in which the triangles are not necessarily distinct, and show that in this case, the same bound holds asymptotically. After posting the original arXiv version of this paper, we learned that the sharp upper bound of $\frac{n^2}{8}$ was proved much earlier by Gy\H{o}ri (2006) and independently by Frankl, F\"uredi and Simonyi (unpublished). Gy\H{o}ri also obtained a stronger version of our result for the case when repetitions are allowed.
\end{abstract}


\section{Introduction}
    Let $\Eps$ be a family of sets. A rainbow set (with respect to $\Eps$) is a subset $R\subseteq\cup\Eps$, together with an injection $\sigma:R\to\Eps$ such that $e\in\sigma(e)$ for all $e\in R$. We view every member of $\Eps$ as a different color, and every $e\in R$ as colored by $\sigma(e)$, hence the ``rainbow" terminology.
    Note that, in general, we use the term ``family" in the sense of ``multiset", allowing repeated members and treating them as different colors. 

    If each set in $\Eps$ satisfies some property $\mathcal{P}$ of interest, how large does $\Eps$ need to be in order to guarantee the existence of a rainbow set $R$ that also satisfies $\mathcal{P}$? A classic result of this kind is B{\'a}r{\'a}ny's colorful Carath{\'e}odory theorem \cite{barany1982Caratheodory}: Every family of $n+1$ subsets of $\mathbb{R}^n$, each containing a point $a$ in its convex hull, has a rainbow set satisfying the same property.
    There are several results of this type in extremal graph theory. For example, generalizing a theorem of Drisko \cite{drisko1998transversals}, Aharoni and Berger \cite{aharoniberger2009rainbowmatchings}  proved that $2n - 1$ matchings of size $n$ in any bipartite graph have a rainbow matching of size $n$. Another example, and the motivation for this paper, is a theorem by Aharoni et al.~\cite{aharoni2021rainbowoddcycles}: Every family of $n$ odd cycles (viewed as edge sets) on $n$ vertices has a rainbow odd cycle. 
    
    Here we are interested in the case when both the cycles in the given family and the desired rainbow one are not just any odd-length cycles but triangles. The existence of rainbow triangles has been studied in the literature in a different context: Given a graph and a coloring of its edges, under what conditions must there be a triangle whose edges have distinct colors? A fundamental structural result is due to Gallai \cite{gallai1967transitiv}:
    If $G$ is an edge-colored complete graph on at least two vertices without a rainbow triangle, there is a nontrivial partition $\mathscr{P}$ of $V(G)$ such that the edges between different parts are colored with at most two colors, and the edges between each pair of parts $P,Q\in\mathscr{P}$ all have the same color. Erd\H{o}s et al.~\cite{erdos1975anti} observed that every coloring of the edges of the complete graph $K_n$ with at least $n$ colors has a rainbow triangle; this was the starting point of their ``anti-Ramsey theory". Several more detailed conditions guaranteeing rainbow triangles were found in \cite{gyarfas2004tricolored,gyarfas2010noncomplete,li2012hetero,li2013rainbowc3c4,li2014rainbowtriangles,aharoni2019caccetta} and other studies.

    \begin{figure}
    \centering
    \includegraphics[width=0.3\textwidth]{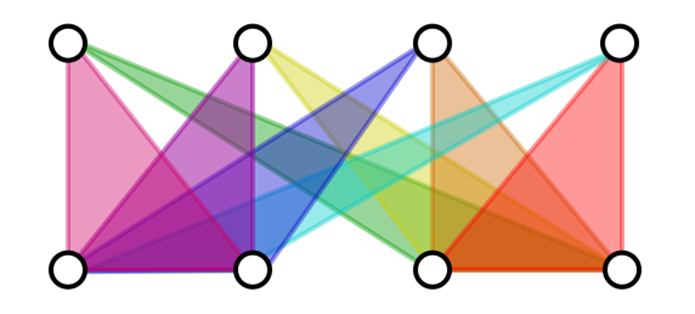}
    \caption{\label{fig:graphexample1}The family $\T^*_8$.}
    \end{figure}

    Let us return to our problem of determining the largest size of a family of triangles on $n$ vertices without a rainbow triangle. A cheap upper bound of $\frac{n^2}{4}$ can be
    proved by combining Mantel's theorem and the general (and easy) Proposition~3.1 in~\cite{aharoni2021rainbowoddcycles} concerning rainbow sets. 
    A lower bound of $\frac{n^2}{8}$ can be achieved by taking $\frac{n}{4}$ disjoint pairs of vertices, and connecting each pair to each of the remaining $\frac{n}{2}$ vertices with a triangle. We denote this family by $\T^*_n$ (see Figure 1). These observations indicate that the answer to our question about triangles is quadratic in $n$, in contrast to the answer to the similar problem about arbitrary odd cycles being linear in $n$, as mentioned above. 
    
    Determining the correct constant between $\frac{1}{8}$ and $\frac{1}{4}$ requires more effort. In Section 2, using a method inspired by a folkloric proof of Mantel's theorem~\cite{aigner1995turan}, we prove that $\frac{n^2}{8}$ is the right answer when the family consists of distinct triangles. Moreover, we show that $\T^*_n$ is the only extremal such family. In Section 3 we use the result of Ruzsa and Szemer{\'e}di \cite{ruzsa1978triple} about the famous $(6,3)$-problem, to show that if triangles are allowed to repeat, the same upper bound holds asymptotically. Namely, when counting triangles with repetitions, we still cannot have more than $\frac{n^2}{8}(1+o(1))$ of them. As mentioned in the abstract, we have recently learned that our results (except for the characterization of the extremal family) had been discovered earlier.


\section{Families of distinct triangles}

In this section, the family $\T$ of triangles will be a set. Given $n$ vertices, with $n$ divisible by $4$, the family $\T^*_n$ introduced above is a set of $\frac{n^2}{8}$ triangles without a rainbow triangle. The following theorem says that this is the largest possible size of such a family. It turns out to be a rediscovery of Theorem~2 in Gy\H{o}ri~\cite{gyori2006}, which was also independently proved by Frankl, F\"uredi and Simonyi (unpublished).

\begin{theorem}
Let $V$ be a set of $n$ vertices and $\T$ a set of triangles on $V$ having no rainbow triangle. Then $|\T| \le \frac{n^2}{8}$.
\end{theorem}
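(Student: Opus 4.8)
The plan is to translate the ``no rainbow triangle'' hypothesis into purely local, Mantel-type conditions on neighbourhoods, and then run a weighting argument in the spirit of the folkloric proof of Mantel's theorem. For a vertex $w$, let $L_w$ be the graph on $V\setminus\{w\}$ whose edges are the pairs $\{a,b\}$ with $\{w,a,b\}\in\T$; thus the number of triangles through $w$ is exactly $|E(L_w)|$, and $3|\T|=\sum_{w}|E(L_w)|$. Let $H=\bigcup\T$ be the support graph (the edges lying in at least one triangle) and, for $e\in H$, let $d(e)$ be the number of triangles of $\T$ containing $e$.

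First I would record two structural consequences of the hypothesis. (1) \emph{Each link $L_w$ is triangle-free}: a triangle $\{a,b,c\}$ in $L_w$ would mean $\{w,a,b\},\{w,b,c\},\{w,a,c\}\in\T$, and these three distinct triangles would realise $\{a,b,c\}$ as a rainbow triangle. (2) \emph{For every edge $uv\in H$, its apex set $N(uv)=\{x:\{u,v,x\}\in\T\}$ is independent in $H$}: if $w,w'\in N(uv)$ were joined by an edge of $H$ lying in some triangle $T''$, then $\{u,w,w'\}$ would be rainbow, with $uw$ taken from $\{u,v,w\}$, $uw'$ from $\{u,v,w'\}$, and $ww'$ from $T''$ (distinct since the first two omit $w'$ and $w$ respectively). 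Since $\deg_{L_w}(a)=d(wa)$, applying Mantel's local inequality inside the triangle-free graph $L_w$ to an edge $\{a,b\}\in L_w$ gives $d(wa)+d(wb)\le n-1$; summing this over the three vertices of each triangle yields the clean per-triangle bound $\sum_{e\in T}d(e)\le \tfrac{3}{2}(n-1)$.

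With these in hand I would set up the weighting engine. Assign weights $x_v\ge 0$ with $\sum_v x_v=1$ and study a quadratic form counting the triangles of $\T$, normalised so that the uniform weights $x_v=1/n$ return $3|\T|/n^2$; following the folkloric proof, I would shift weight between non-adjacent vertices to push an optimal weighting onto a structured support, where conditions (1) and (2) force the form to be at most $\tfrac38$, and then specialise to uniform weights to extract $|\T|\le n^2/8$. The characterisation of $\T^*_n$ as the unique extremal family should fall out by tracing the equality cases through conditions (1) and (2).

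The main obstacle is exactly the presence of \emph{heavy} edges, i.e.\ edges $e$ with $d(e)$ large (in $\T^*_n$ the within-pair edges have $d(e)=n/2$). Such edges make the crude quadratic form, or a Cauchy--Schwarz estimate on $\sum_e d(e)^2$, blow up and only reproduce the weak $n^2/4$-type bound; the whole difficulty is to use the apex-independence of condition (2) to show that a heavy edge forces a large $H$-independent set of apexes ``opposite'' to it, thereby splitting the vertex set into a part carrying the heavy edges and a part carrying their apexes, in the pattern of $\T^*_n$. Pinning down the exact constant $\tfrac18$ and the rigidity of this bipartite product structure is where the real work lies.
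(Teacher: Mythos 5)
Your preliminary observations are correct and genuinely relevant: the links $L_w$ are triangle-free, the apex set $N(uv)$ of every edge is independent in the support graph $H$, and the per-triangle bound $\sum_{e\in T}d(e)\le\tfrac32(n-1)$ follows from local Mantel exactly as you say. But from that point on the proposal is a plan rather than a proof, and the plan as stated cannot work. The natural quadratic form with your normalisation is $Q(x)=\sum_{uv\in H}d(uv)\,x_ux_v$, since $\sum_e d(e)=3|\T|$; its maximum over the simplex is \emph{not} bounded by $\tfrac38$. Already in the extremal family $\T^*_n$, putting weight $\tfrac12$ on each endpoint of a within-pair edge (which has $d(e)=n/2$) gives $Q\ge n/8$, so no Motzkin--Straus-style weight-shifting can force an optimum near the uniform weighting: uniform weights are very far from maximising $Q$. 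This is exactly the heavy-edge obstruction you identify yourself, and your closing admission that ``pinning down the exact constant $\tfrac18$ and the rigidity of this bipartite product structure is where the real work lies'' concedes that the core of the argument --- the step that actually produces the constant $\tfrac18$ --- is missing. Conditions (1) and (2) plus a free weighting do not, by themselves, split $V$ into a heavy-edge part and an apex part; no such splitting is established anywhere in the proposal.

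For comparison, the paper supplies the missing mechanism by anchoring the argument to a \emph{maximum independent set} rather than to an optimal weighting. It fixes a largest independent set $A$ of $H$, sets $B=V\setminus A$, and assigns to each triangle $t$ an edge $\beta(t)$ of $t$ lying inside $B$ (one exists because $A$ is independent), preferring an edge shared with another triangle when there is a choice. The key step is local and per-vertex: for each $b\in B$, the triangles assigned to edges at $b$ contribute pairwise distinct vertices forming an independent set $I^b$ --- the apex when the assigned edge is shared (your observation (2) is the heart of why these apexes do not see each other), and the other endpoint of the assigned edge when it is not shared --- so that $\sum_{e\ni b}|\beta^{-1}(e)|=|I^b|\le|A|$ by maximality of $A$. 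Summing over $b\in B$ gives $2|\T|\le|A||B|\le n^2/4$, i.e.\ $|\T|\le n^2/8$. If you want to rescue your approach, this is the ingredient to import: replace the weighting engine by the maximality of an independent set, and use your apex-independence inside the per-vertex construction of $I^b$; as it stands, your write-up proves the structural lemmas but not the theorem.
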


\begin{proof}
Let $V$ and $\T$ be as in the statement of the theorem. Consider the graph $G$ on the vertex set $V$ having the union of the edge sets of the triangles in $\T$ as its edge set.
Let $A\subseteq V$ be a largest independent set of vertices in $G$.
Let $B = V\setminus A$, and denote by $E(B)$ the set of edges in the subgraph induced by $G$ on $B$.
Note that, by the independence of $A$, each triangle in $\T$ must have at least one edge in $E(B)$. Also note that, by the absence of rainbow triangles, each triangle in $\T$ has at most one edge that it shares with other triangles in $\T$.

We define a function $\beta:\T\longrightarrow E(B)$ as follows. For each $t\in \T$, we set $\beta(t)$ to be one of its edges that is in $E(B)$; if $t$ has more than one edge (and hence all its edges) in $E(B)$, and one of them is shared with another triangle, we choose that edge to be $\beta(t)$; otherwise, $\beta(t)$ is chosen arbitrarily.
For an edge $e\in E(B)$, we denote $d(e)\vcentcolon= |\beta^{-1}(e)|$.
Then, we have
    \begin{equation}
        \sum_{b\in B}{\sum_{e\in E(B): b\in e}{d(e)}} = 2|\T|.
    \end{equation}
Indeed, each triangle in $\T$ corresponds to one edge in $E(B)$ under $\beta$, and thus it is counted once in $d(e)$ for a single $e\in E(B)$. Each edge $e=\{u,v\}\in E(B)$ is considered twice in the inner sum - once with respect to $u$ and once with respect to $v$, and so we get $2|\T|$ in total. 

We will show below that for each $b\in B$,
    \begin{equation}
        \sum_{e\in E(B): b\in e}{d(e)} \leq |A|.
    \end{equation}
By summing over all $b\in B$, this implies that
    \begin{equation}
        \sum_{b\in B}{\sum_{e\in E(B): b\in e}{d(e)}} \leq |B||A|.
    \end{equation}
From (1) and (3) we get what we need:
    \begin{equation}
        2|\T| \leq |B||A|\leq (\frac{|A|+|B|}{2})^2 = \frac{n^2}{4} \Rightarrow |\T| \leq \frac{n^2}{8}.
    \end{equation}

It remains to show (2). Fix a vertex $b\in B$. It is enough to exhibit an independent set $I^b \subseteq V$ of size $\sum_{e\in E(B): b\in e}{d(e)}$, and from $A$'s maximality, the desired inequality will follow.
We construct the independent set $I^b$ as follows.

For each edge $e\in E(B)$ such that $b\in e$, and for each triangle $t\in\beta^{-1}(e)$, we pick a vertex of $t$ according to the rules below, and place it in $I^b$:
\begin{itemize}
\item If $\beta^{-1}(e) = \{t\}$, we pick the vertex of $e = \beta(t)$ that is not $b$.
\item Else, we pick the vertex of $t$ that is not in $e = \beta(t)$.
\end{itemize}

\begin{figure}
\centering
\includegraphics[width=0.25\textwidth]{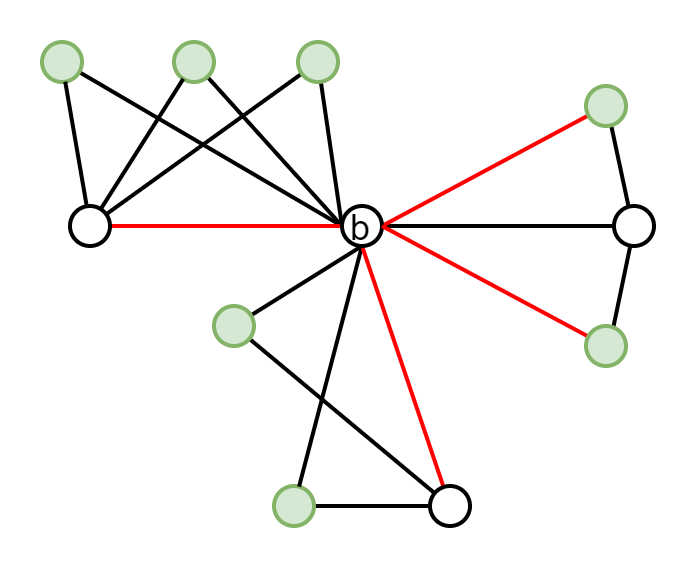}
\caption{\label{fig:graphexample2}The set $I^b$ (in green) for a given $b$. In each triangle $t$, the edge $\beta(t)$ is marked in red.}
\end{figure}

See Figure 2 for an illustration. We have to make sure that:
\begin{itemize}
    \item The vertices chosen that way are distinct (and thus indeed $|I^b|=\sum_{e\in E(B): b\in e}{d(e)}$). 
    \\Suppose for the sake of contradiction that two distinct triangles $t_1, t_2$ contribute the same vertex $v$ to $I^b$. Then $t_1, t_2$ share the edge $\{b,v\}$, so we have $V(t_1)=\{b,v,v_1\}$ and $V(t_2)=\{b,v,v_2\}$ with $v_1\neq v_2$. 
    If $v\notin B$ then $\beta(t_1)=\{b,v_1\}$. Because $t_1$ shares $\{b,v\}$ with another triangle, $\{b,v_1\}$ is not shared with another triangle, and so the first case in the definition of $I^b$ applies: $t_1$ contributes $v_1$ to $I^b$, not $v$ as assumed. If $v \in B$ then, by the definition of $\beta$, we have $\beta(t_1)=\beta(t_2)=\{b,v\}$. Thus, the second case in the definition of $I^b$ applies: $t_1$ contributes $v_1$ to $I^b$, again contradicting our assumption.
    
    \item $I^b$ is independent. \\Suppose for the sake of contradiction that two vertices $v_1\neq v_2\in I^b$ that were chosen from triangles $t_1,t_2$ respectively 
    form an edge $\{v_1,v_2\}$, taken from some triangle $t\in \T$. If $t\neq t_1, t_2$, a rainbow triangle arises on the vertices $b, v_1, v_2$. So, we may assume w.l.o.g. that $t=t_1$, i.e., $V(t_1)=\{b,v_1,v_2\}$ and $V(t_2)=\{b,v_2,v_3\}$ for some $v_3\neq v_1$. Now, an argument similar to the one in the previous paragraph shows that, whether or not $v_2$ belongs to $B$, the vertex contributed by $t_2$ to $I^b$ is $v_3$, not $v_2$ as assumed.
\end{itemize}

That finishes the proof.
\end{proof}

We now characterize the equality case. That is, families $\T$ of exactly $\frac{n^2}{8}$ distinct triangles on $n$ vertices without a rainbow triangle. Such a construction must in particular have $4|n$, and the following theorem shows that it must coincide with the family $\T^*_n$.

\begin{theorem}
Let $4|n\in\mathbb{N}$. Let $\T^*_n$ be the family of triangles constructed by taking $\frac{n}{4}$ disjoint pairs of vertices, and connecting each of them to each of the remaining $\frac{n}{2}$ vertices with a triangle.
Then $\T^*_n$ is the unique set of $\frac{n^2}{8}$ triangles on $n$ vertices without a rainbow triangle, up to isomorphism.
\end{theorem}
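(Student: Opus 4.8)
The plan is to revisit the proof of Theorem 2.1 and extract the equality conditions that force the structure. Recall the chain of inequalities that ended the first proof: $2|\T| \le |B||A| \le \left(\frac{|A|+|B|}{2}\right)^2 = \frac{n^2}{4}$. For $|\T| = \frac{n^2}{8}$ we need equality everywhere. First I would note that equality in the AM–GM step $|B||A| \le \left(\frac{|A|+|B|}{2}\right)^2$ forces $|A| = |B| = \frac{n}{2}$; in particular $4 \mid n$ follows since $|A|$ must be an integer and we will see that $A$ splits further into pairs. Equality in $2|\T| \le |B||A|$ means that inequality (2), $\sum_{e \in E(B):\, b \in e} d(e) \le |A|$, must hold with equality for every $b \in B$. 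Since the proof of (2) realized the left-hand side as the size of an explicit independent set $I^b$ and then invoked maximality of $A$, equality means $|I^b| = |A|$ for each $b$, so each $I^b$ is itself a maximum independent set.

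Next I would exploit $|I^b| = |A|$ structurally. Every vertex of $I^b$ lies in a triangle through $b$, so $b$ is joined (in $G$) to all $\frac{n}{2}$ vertices of $I^b$; since $|I^b| = \frac{n}{2}$, the vertex $b$ has all of $I^b$ in its neighborhood, and $I^b$ together with $b$ occupies essentially all of $V$. The key claim I would aim to prove is that the edges $\beta(t) = \{b, v'\}$ used at a fixed $b$ all share a common partner, i.e.\ there is a single vertex $b'$ with $\beta(t) = \{b, b'\}$ for every triangle $t$ counted at $b$; otherwise one could enlarge or modify $I^b$ or find a rainbow triangle. This would identify, for each $b \in B$, a distinguished mate $b' \in B$, and I would argue that the relation $b \leftrightarrow b'$ is a perfect matching of $B$ into $\frac{n}{4}$ pairs, each pair being the ``base'' of a book of triangles. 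Establishing that $A$ is exactly the apex set shared by all these books, and that every vertex of $A$ is joined by a triangle to every matched pair, then reconstructs $\T^*_n$.

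I would then verify the remaining rigidity: that $B$ is precisely the set of $\frac{n}{2}$ vertices partitioned into $\frac{n}{4}$ pairs, that $A$ is an independent set of size $\frac{n}{2}$ meeting every triangle in exactly its apex, and that no triangle has an edge inside $A$ (immediate from independence of $A$) nor spans two different pairs of $B$ (which would either create a rainbow triangle or contradict that each $I^b$ is independent of size $\frac{n}{2}$). Counting then forces exactly one triangle per (pair, apex) incidence, giving $\frac{n}{4} \cdot \frac{n}{2} = \frac{n^2}{8}$ triangles and pinning down the family as $\T^*_n$ up to relabeling the vertices.

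The main obstacle I expect is the second paragraph: showing that the $\beta$-edges at a fixed $b$ all emanate to a single common mate $b'$, rather than fanning out to several vertices of $B$. The tension is that the counting argument is symmetric and does not, on its face, prevent a ``star-like'' configuration at $b$; I would resolve it by carefully re-running the independence argument for $I^b$ under the equality hypothesis, tracking which vertices each triangle contributes, and showing that two distinct mates $b', b''$ would force either a vertex collision in $I^b$ (contradicting $|I^b| = \frac{n}{2}$) or an edge inside $I^b$ coming from a third triangle (contradicting independence), hence a rainbow triangle. Once the common-mate claim is in hand, the perfect-matching structure on $B$ and the apex role of $A$ follow by symmetry and a final counting check.
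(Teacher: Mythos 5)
Your high-level strategy is the same as the paper's: force $|A|=|B|=\frac{n}{2}$ and $|I^b|=|A|$ for every $b\in B$, then show each $b\in B$ has a unique ``mate'' so that $B$ decomposes into $\frac{n}{4}$ pairs serving as bases of books of triangles with apexes in $A$. But the step you yourself flag as the main obstacle is a genuine gap, and the resolution you propose cannot work. You want to rule out two distinct mates $b',b''$ of $b$ by re-running the $I^b$ analysis \emph{locally at $b$}, expecting a vertex collision in $I^b$ or an edge inside $I^b$. Consider, however, $n=8$ with $A=\{a_1,a_2,a_3,a_4\}$, $B=\{b,b',b'',b'''\}$, and the four triangles $\{a_1,b,b'\},\{a_2,b,b'\},\{a_3,b,b''\},\{a_4,b,b''\}$ through $b$. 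Here $\beta$ necessarily sends the first two triangles to $\{b,b'\}$ and the last two to $\{b,b''\}$, each triangle contributes its apex to $I^b$, and $I^b=\{a_1,a_2,a_3,a_4\}=A$ is a maximum independent set: no collision, no edge inside $I^b$, and no rainbow triangle exists among the triangles containing $b$. So the local data at $b$ is entirely consistent with two distinct mates; any contradiction must invoke triangles \emph{not} containing $b$ (in this example, equality at $b'$ and $b''$ forces the colors of the remaining edges, and only then does a rainbow triangle appear, e.g.\ on $a_3,b,b'$, using a triangle on $\{b'',b'''\}$). This is precisely why the paper abandons the local viewpoint at this stage and instead analyzes the colored multigraph $\T^B$ on $B$ (edge $=$ the $B$-edge of a triangle, color $=$ its apex), establishes four global properties (triangle-freeness, distinct colors on the end-edges of any three-edge path, same-colored edges at a vertex must be simple, $m$-regularity), and then eliminates simple edges by playing a vertex count ($l+l'+4\le m$) against a color count ($k+k'\le m$). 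Your sketch contains no substitute for this global argument, and the purely local one you describe provably cannot supply it.

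A second, smaller gap: you never rule out triangles with all three vertices in $B$. Independence of $A$ only guarantees that each triangle has \emph{at most} one vertex in $A$, so before you may speak of ``the apex'' of every triangle and count one triangle per (pair, apex) incidence, you must exclude triangles contained entirely in $B$. This is the paper's Step 1, and it is not automatic: it exploits the tie-breaking rule in the definition of $\beta$ (when a triangle has all its edges in $E(B)$, a shared edge is preferred as $\beta(t)$) to show that a triangle $t\subseteq B$ with $\beta(t)=\{u,v\}$ and third vertex $b$ would make $I^b\cup\{v\}$ an independent set of size $|A|+1$, a contradiction. (Also, a minor slip: it is $B$, not $A$, that splits into pairs, and the divisibility $4\mid n$ comes from $|B|=\frac{n}{2}$ being partitioned into pairs.)
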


\begin{proof}
Let $\T$ be a set of $\frac{n^2}{8}$ triangles on $n$ vertices without a rainbow triangle. We will show that $\T \cong \T^*_n$. To this end, we refer to the notations and arguments in the proof of Theorem 2.1, where we showed: 
\begin{equation*}
        2|\T| = \sum_{b\in B} \sum_{e\in E(B): b\in e}{d(e)} \leq \sum_{b\in B} |A| = |B||A|\leq (\frac{|A|+|B|}{2})^2 = \frac{n^2}{4}. 
    \end{equation*}
    By our assumption, $2|\T| = \frac{n^2}{4}$ and thus all the inequalities are equalities. Our proof proceeds in two steps.

\textbf{Step 1.} 
    We show that $\T$ does not have a triangle with all three vertices in $B$.

    Assume for the sake of contradiction that $t$ is a triangle in $\T$ contained in $B$. We denote by $b\in B$ the vertex of $t$ that is not in $\beta(t)=\{u,v\}$. Thus $V(t)=\{b,u,v\}$.
    Then, both $\{b,u\}$ and $\{b,v\}$ are not shared with other triangles (otherwise by our choice of $\beta(t)$, we would have picked one of them instead of $\{u,v\}$). We claim that $v \notin I^b$ and $I^b\cup \{v\}$ is independent, where $I^b$ is the independent set of vertices we chose for the vertex $b$ in the proof of Theorem 2.1. This will yield the desired contradiction for Step~1, because equality in the inequalities (2) gives in particular $|I^b| = \sum_{e\in E(B): b\in e}d(e) = |A|$, and $A$ is a largest independent set.
    \begin{itemize}
        \item First, we show that $v \notin I^b$. \\Note that $t$ itself does not contribute a vertex to $I^b$, because $\beta(t)$ does not contain $b$. Thus, if $v \in I^b$ then it must be contributed by some triangle $t' \ne t$. But then $t$ and $t'$ share the edge $\{b,v\}$, contradicting the above.
        \item The set $I^b\cup \{v\}$ is independent.\\ $I^b$ itself is independent and so it remains to rule out an edge of the form $\{v,v'\}$ where $v'\in I^b$. Suppose that $\{v,v'\}$ is such an edge. Then $v'$ is contributed to $I^b$ by some triangle $t'\ne t$, and the edge $\{v,v'\}$ belongs to some triangle $t''$. Note that $v' \ne u$ (or else $t$ and $t'$ would share the edge $\{b,u\}$) and hence $t'' \ne t$. Also $t'' \ne t'$, otherwise their common vertex set would be $\{b,v,v'\}$, and this triangle shares the edge $\{b,v\}$ with $t$. Thus, $t$, $t'$ and $t''$ are three distinct triangles which form together a rainbow triangle on $\{b,v,v'\}$, which is forbidden.
    \end{itemize}

\begin{figure}
\centering
\includegraphics[width=0.3\textwidth]{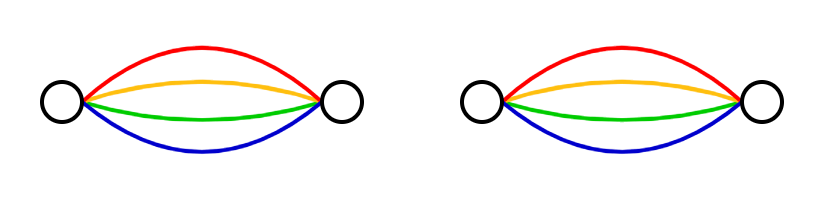}
\caption{\label{fig:graphexample3}The colored multigraph $(\T^*_8)^B$.}
\end{figure}

\textbf{Step 2.} We show that $\T$ is of the form $\T^*_n$.

Writing $m\vcentcolon=\frac{n}{2}$, we note that equality in the inequalities above requires that $|A|=|B|=m$.
    From Step 1, we know that every triangle in $\T$ must have one vertex in $A$ and two vertices in $B$. 
    Now, we look at the colored multigraph $\T^B$ induced by $\T$ on $B$ by taking from each triangle its only edge in $B$, and coloring it with its only vertex in $A$. We say that an edge in $\T^B$ is simple if it has no parallel edges, and otherwise it is non-simple.
	So $\T^B$ has $m$ vertices and its edges are colored with $m$ colors. In addition, it has the following properties:
	\begin{enumerate}
    \item $\T^B$ is triangle-free.
    \item If $e_1,e_2,e_3$ is a path of three edges, then $e_1$ and $e_3$ have distinct colors.
	\item If $e_1,e_2$ are two edges sharing a vertex and having the same color, then both $e_1$ and $e_2$ are simple.
	\item $\T^B$ is $m$-regular.
	\end{enumerate}
	Properties 1-3 hold due to the fact that $\T$ is rainbow-triangle-free (actually, one can check that 1-3 are also enough to preclude rainbow triangles in $\T$). Property 4 holds because equality in (2) means that for each $b\in B$ we have $\sum_{e\in E(B): b \in e}d(e)=m$, and this counts exactly the number of edges in $\T^B$ incident with $b$.
	
    We will show below that a multigraph with these properties must consist of $\frac{m}{2}$ disjoint pairs of vertices, each having $m$ edges of all colors between them (see Figure 3).
    Such $\T^B$ corresponds to a family $\T\cong \T^*_n$, which concludes the proof.
    
	First, if every edge in $\T^B$ is non-simple, then by property 3, every two edges of the same color are disjoint.
	That is, the edges of each color $a$ form a matching $M_a$ in $B$. Moreover, property 4 implies that every vertex in $B$ is incident with edges of all $m$ colors, so the $M_a$'s are perfect matchings. Finally, due to property 2, all the $M_a$'s must induce the same partition of $B$ into $\frac{m}{2}$ pairs of vertices, which gives us the desired multigraph.
	
	Thus, we may assume that there is a simple edge $e=\{v,v'\}$ in $\T^B$. Suppose there are $l$ simple edges incident with $v$ (excluding $e$) and $k$ non-simple ones. Similarly, there are $l'$ and $k'$ simple and non-simple edges respectively incident with $v'$ (see Figure 4).
	Now, from property 4 applied to both $v$ and $v'$ we have
	\begin{equation*}
	    k+l+1 = k'+l'+1 = m.
	\end{equation*}
	Note that $k>0$, because otherwise $v$ has $m$ distinct neighbors, which is impossible as $|B|=m$.
	Symmetrically, $k'>0$.
	Therefore, there are at least $l+1$ and $l'+1$ vertices in the two respective sides of $e$, all different due to property 1. Along with ${v,v'}$, there are at least $l+l'+4$ vertices in $B$ and thus
	\begin{equation*}
	    l+l'+4\leq m.
	\end{equation*}
	Finally, property 2 implies that the edges of the two sides have different colors. Due to property 3, in the two sides there are at least $k,k'$ colors respectively. This gives
	\begin{equation*}
	    k+k'\leq m.
	\end{equation*}
	By summing the last two inequalities, we obtain
	\begin{equation*}
	    (k+k')+(l+l'+4)\leq 2m.
	\end{equation*}
	This can be rearranged as
	
	\begin{equation*}
	    (k+l+1)+(k'+l'+1)\leq 2m-2,
	\end{equation*}
    which is a contradiction because the left-hand side equals $2m$.
\end{proof}

\begin{figure}
\centering
\includegraphics[width=0.35\textwidth]{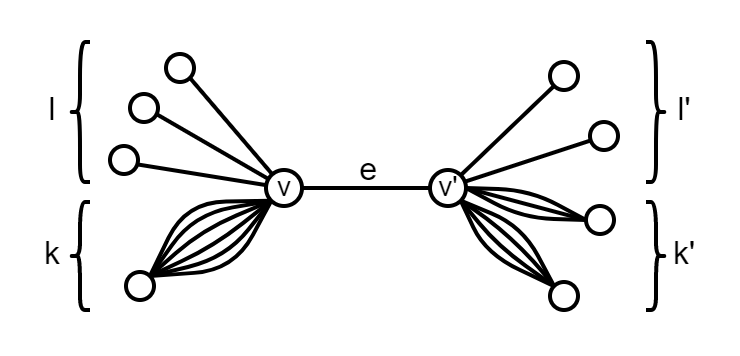}
\caption{\label{fig:graphexample4}The notations in the proof of Step 2.}
\end{figure}


\section{Families of not necessarily distinct triangles}

We now treat another version of this problem, where the triangles are not necessarily distinct.
Suppose $\T$ is a rainbow-triangle-free family of triangles on $n$ vertices, where duplicates are allowed. We want to bound the size of $\T$ as a multiset. 

Of course, each triangle must appear at most twice in $\T$, as three copies of the same triangle create a rainbow triangle. Therefore, the triangles in $\T$ can be of two types - those that appear twice in $\T$, and those that appear only once. Let $\T_1$ be the set of all triangles in $\T$ (one copy of each), and let $\T_2 \subseteq \T_1$ be the set of those triangles that appear twice. Then $|\T|=|\T_1|+|\T_2|$ and we can bound each summand separately.

First, from Theorem 2.1 it follows that $|\T_1|\leq \frac{n^2}{8}$. To bound $|\T_2|$, we observe that the triangles in $\T_2$ are edge-disjoint, due to the absence of rainbow triangles. For the same reason, the graph $G_2$ whose edge set is the disjoint union of the edge sets of the triangles in $\T_2$ has no triangles other than those in $\T_2$. Thus, in $G_2$ every edge belongs to a unique triangle, and we can apply the following version of a theorem of Ruzsa and Szemerédi \cite{ruzsa1978triple}:
\begin{theorem*}[Ruzsa and Szemerédi]
    A graph $G$ on $n$ vertices in which every edge belongs to a unique triangle, has $o(n^2)$ edges.
\end{theorem*}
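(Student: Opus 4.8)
The plan is to deduce this statement from the \emph{triangle removal lemma}: for every $\epsilon>0$ there is a $\delta>0$ such that every graph on $n$ vertices with at most $\delta n^3$ triangles can be made triangle-free by deleting at most $\epsilon n^2$ edges. This lemma is the standard modern route to the Ruzsa--Szemer\'{e}di theorem, and it is itself a consequence of Szemer\'{e}di's regularity lemma; that dependence is exactly where the genuine difficulty resides. Granting the removal lemma, the remaining argument is short and elementary.

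First I would record the structural consequence of the hypothesis. If every edge of $G$ lies in a unique triangle, then the triangles of $G$ are pairwise edge-disjoint, since a shared edge would lie in two triangles. Writing $m=|E(G)|$ and counting edge--triangle incidences in two ways---each edge contributes exactly one and each triangle exactly three---the number of triangles of $G$ equals $m/3$, and these $m/3$ triangles form an edge-disjoint packing.

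Next I would turn edge-disjointness into a lower bound on the cost of destroying all triangles. Because the $m/3$ triangles are pairwise edge-disjoint, any set of edges whose deletion makes $G$ triangle-free must meet each of them separately, and therefore has size at least $m/3$.

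Finally I would play this lower bound against the removal lemma. Fix any $\epsilon>0$ and let $\delta>0$ be the constant it supplies. The number of triangles of $G$ is $m/3\le\binom{n}{2}/3<n^2/2$, which is below $\delta n^3$ once $n>1/(2\delta)$; for such $n$ the removal lemma guarantees that all triangles can be destroyed by deleting at most $\epsilon n^2$ edges. Comparing with the lower bound $m/3$ on the number of deletions needed yields $m/3\le\epsilon n^2$, that is, $m\le 3\epsilon n^2$. Since $\epsilon>0$ was arbitrary, $m=o(n^2)$, as asserted. The only nontrivial ingredient is the removal lemma, so the main obstacle is establishing (or invoking) the regularity method; the edge-disjoint packing bound, which converts ``few triangles'' into ``many forced deletions'', is the elementary but indispensable bridge between the two.
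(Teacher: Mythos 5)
Your argument is correct, but it is worth being clear about what it proves relative to what the paper does: the paper does not prove this statement at all---it invokes it as a black-box citation to Ruzsa and Szemer\'{e}di, whose original argument runs through the regularity lemma applied to the equivalent $(6,3)$-problem. Your route, reducing the theorem to the triangle removal lemma, is the standard modern packaging, and every step you supply is sound: edge-uniqueness gives pairwise edge-disjoint triangles, double counting gives exactly $m/3$ of them, edge-disjointness forces any triangle-destroying deletion set to have size at least $m/3$, and playing this against the removal lemma (valid since $m/3 < n^2/2 \le \delta n^3$ for large $n$) gives $m \le 3\epsilon n^2$ for all large $n$, hence $m = o(n^2)$. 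The one caveat is that this is a reduction rather than a proof from scratch: the triangle removal lemma is itself essentially the content of the Ruzsa--Szemer\'{e}di paper (it is the form in which their argument is now usually stated), so your derivation and the cited theorem are two faces of the same result, with all the genuine difficulty---Szemer\'{e}di regularity---hidden inside the lemma you assume. You acknowledge this explicitly, which is the right thing to do; what your write-up buys is that the deduction used in Section 3 of the paper becomes self-contained modulo a single standard lemma, and it makes transparent exactly where the $o(n^2)$ (rather than any explicit constant) comes from, consistent with the remark in the paper that stronger quantitative forms such as Fox's $n^2/e^{\Omega(\log^* n)}$ require different arguments.
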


We note that Ruzsa and Szemer{\'e}di treated the equivalent $(6,3)$-problem, which asks for the maximal number of triples of points that one can select from $n$ given points, in such a way that no six points contain three of the selected triples. A stronger form of the bound, namely $\frac{n^2}{e^{\Omega(\log^*n)}}$, was proved by Fox~\cite{fox2011removallemma}.

We observe that the graph $G_2$ has $3|\T_2|$ edges, so we get $|\T_2|=o(n^2)$.
Together with the bound on $|\T_1|$ we obtain $|\T_1|+|\T_2|\leq \frac{n^2}{8}(1+o(1))$. Thus we conclude the following:

\begin{theorem}
    Let $V$ be a set of $n$ vertices and $\T$ a family of (not necessarily distinct) triangles on $V$ having no rainbow triangle. Then $|\T|\leq\frac{n^2}{8}(1+o(1))$.
\end{theorem}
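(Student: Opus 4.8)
The plan is to bound $|\T|$ by splitting the multiset into its underlying set and its ``doubled'' part, and to bound each piece by a quadratically-controlled quantity. First I would observe that no triangle can appear three or more times in $\T$, since three copies of the same triangle $\{x,y,z\}$ immediately yield a rainbow triangle (take one edge from each copy). Hence every distinct triangle appears once or twice, and writing $\T_1$ for the set of distinct triangles and $\T_2\subseteq\T_1$ for those occurring twice, we have the exact identity $|\T|=|\T_1|+|\T_2|$. This reduces the problem to bounding the two summands separately.

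For $|\T_1|$ the work is already done: $\T_1$ is a \emph{set} of triangles with no rainbow triangle (a rainbow triangle in $\T_1$ would certainly be one in $\T$), so Theorem~2.1 gives $|\T_1|\le\frac{n^2}{8}$ directly. The interesting term is $|\T_2|$, and here the key structural observation is that the triangles in $\T_2$ must be pairwise edge-disjoint: if two distinct doubled triangles shared an edge $e$, I could take $e$ from one copy of the first triangle, and the two non-shared edges from a copy of each triangle, producing a rainbow triangle. Edge-disjointness lets me form the graph $G_2$ that is the (now genuinely disjoint) union of the edge sets of the triangles in $\T_2$; this graph has exactly $3|\T_2|$ edges.

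The crux is to show $|\T_2|=o(n^2)$, and this is where I would invoke the Ruzsa--Szemer\'edi theorem. The bridge is to verify that in $G_2$ every edge lies in a \emph{unique} triangle. Each edge of $G_2$ came from some triangle in $\T_2$, so it lies in at least that triangle; I must rule out any \emph{additional} triangle among the edges of $G_2$. Such an extra triangle would have its three edges coming from (at least two, hence) three distinct triangles of $\T_2$—again a rainbow triangle, contradicting our hypothesis. (The ``at least two distinct'' step uses edge-disjointness to exclude two edges from a single triangle of $\T_2$.) With the unique-triangle property established, Ruzsa--Szemer\'edi gives that $G_2$ has $o(n^2)$ edges, whence $3|\T_2|=o(n^2)$ and so $|\T_2|=o(n^2)$. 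Combining, $|\T|=|\T_1|+|\T_2|\le\frac{n^2}{8}+o(n^2)=\frac{n^2}{8}(1+o(1))$, as claimed.

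I expect the main obstacle to be the careful verification of the unique-triangle property of $G_2$, since one must argue that every triangle appearing in the graph $G_2$ is actually a member of $\T_2$ (with its edges all drawn from that single member), rather than an ``accidental'' triangle formed by edges borrowed from several different members; this is precisely where the no-rainbow-triangle hypothesis does its work and must be applied cleanly. The remaining steps—the appeal to Theorem~2.1 and the edge count $3|\T_2|$—are routine once the reduction is in place.
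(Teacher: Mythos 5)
Your proposal is correct and follows essentially the same route as the paper's proof: the same decomposition $|\T|=|\T_1|+|\T_2|$, the bound $|\T_1|\le\frac{n^2}{8}$ via Theorem 2.1, the edge-disjointness of the doubled triangles, the unique-triangle property of $G_2$ forced by rainbow-freeness, and the Ruzsa--Szemer\'edi theorem yielding $|\T_2|=o(n^2)$. The only quibble is the wording of which copies supply which edges when two doubled triangles share an edge, and of where edge-disjointness enters in the unique-triangle verification (what really excludes two edges of an extra triangle coming from one member $t\in\T_2$ is that the extra triangle would then coincide with $t$), but these are presentational, not gaps.
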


We remark that for small values of $n$, there do exist constructions of families of triangles with repetitions having no rainbow triangle, which beat the $\frac{n^2}{8}$ bound. An example for $n=9$ is shown in Figure~5. We conjecture, however, that for sufficiently large $n$, the $\frac{n^2}{8}$ bound holds in its exact form even if repetitions are allowed. As it turns out, Theorem~1 in Gy\H{o}ri~\cite{gyori2006} established this, in a more general form, for $n \ge 100$.

\begin{figure}
\centering
\includegraphics[width=0.18\textwidth]{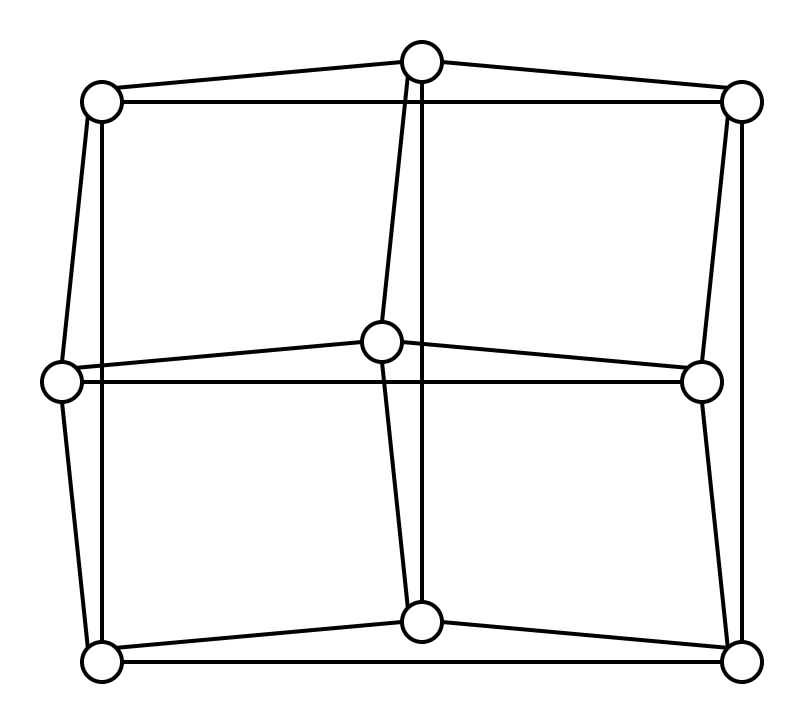}
\caption{\label{fig:graphexample5}A family of $12$ triangles on $9$ vertices, built of two copies of each of the $6$ triangles in the drawing, without a rainbow triangle.}
\end{figure}

\subsection*{Acknowledgment}
We thank Gonen Gazit for his help in creating computer-generated explicit solutions for small values of $n$, which provided us with hints for the general case. We are grateful to Zolt\'an F\"uredi for drawing our attention to~\cite{gyori2006}.

\printbibliography

\end{document}